\newtheorem{theorem}{Theorem}
\newtheorem{lemma}[theorem]{Lemma}
\newcommand{\vkq}[1]{[ #1 ]}
\newcommand{\vkqq}[1]{\left[ #1 \right]}
\newcommand{\vkqqq}[1]{\big[ #1 \big]}
\newcommand{\vkqqqq}[1]{\Big[ #1 \Big]}
\begin{document}

\title{On a trilinear form related to the Carleson theorem}
\author{Vjekoslav Kova\v{c}}
\address{Department of Mathematics, University of Zagreb, Bijeni\v{c}ka cesta 30, 10000 Zagreb, Croatia}
\email{vjekovac@math.hr}
\thanks{The author is partially supported by the MZOS grant 037-0372790-2799 of the Republic of Croatia}

\keywords{multilinear form, bilinear Hilbert transform, Carleson theorem, Walsh function, Bellman function}
\subjclass[2010]{42B20}

\begin{abstract}
The main purpose of this short note is to present an adaptation of the multilinear Bellman function technique from \cite{Kov1}
to the time-frequency analysis.
Demeter and Thiele introduced the two-dimensional bilinear Hilbert transform in \cite{DT}
and showed that the Carleson operator can be identified in particular instances of the corresponding trilinear form.
Demeter considered the Walsh model of one such form in \cite{Dem}, in relation to the discussion of the Walsh-Carleson theorem.
We prove boundedness of this trilinear form for a single triple of exponents at the boundary of the previously established range.
\end{abstract}

\maketitle

\section{Introduction}

The bilinear Hilbert transform is associated with the trilinear form
$$ \Lambda_{1}(f_1,f_2,f_3) := \int_{\mathbb{R}}\Big(\mathrm{p.v.}\!\int_{\mathbb{R}}f_1(x-t)f_2(x+t)\frac{dt}{t}\Big) f_3(x)\,dx . $$
It is a fundamental object in multilinear analysis and its boundedness in a certain range of $\mathrm{L}^p$ spaces
was first shown by Lacey and Thiele in a pair of groundbreaking papers \cite{LT1},\cite{LT2}.
Its two-dimensional analogue was defined by Demeter and Thiele in \cite{DT}. It can be written as
$$ \Lambda_{2}(F_1,F_2,F_3) := \int_{\mathbb{R}^2}\Big(\mathrm{p.v.}\!\int_{\mathbb{R}^2}F_1\big((x,y)+A_1(s,t)\big)
\,F_2\big((x,y)+A_2(s,t)\big) \,K(s,t)\,ds dt\Big) F_3(x,y)\,dx dy , $$
where $K$ is a translation-invariant Calder\'{o}n-Zygmund kernel and $A_1,A_2$ are linear maps on $\mathbb{R}^2$.
A classification of these objects with respect to $A_1$ and $A_2$ was given in \cite{DT}, and it was shown that
\begin{equation}\label{eqlpestimate}
|\Lambda_{2}(F_1,F_2,F_3)| \leq C_{p_1,p_2,p_3} \|F_1\|_{\mathrm{L}^{p_1}(\mathbb{R}^2)}
\|F_2\|_{\mathrm{L}^{p_2}(\mathbb{R}^2)} \|F_3\|_{\mathrm{L}^{p_3}(\mathbb{R}^2)}
\end{equation}
when \,$2\!<\!p_1,p_2,p_3\!<\!\infty$\, and \,$\frac{1}{p_1}\!+\!\frac{1}{p_2}\!+\!\frac{1}{p_3}\!=\!1$,\,
for most of the cases, while the remaining case was studied by the author in \cite{Kov2}.
An interesting observation from \cite{DT} is that boundedness of the linearized Carleson operator
$$ (\mathrm{C}_{N}f)(x) := \mathrm{p.v.}\!\int_{\mathbb{R}}f(x+s)e^{i N(x) s}\frac{ds}{s} $$
on $\mathrm{L}^{p}(\mathbb{R})$, $2<p<\infty$, can be deduced from (\ref{eqlpestimate}) when one chooses
the linear maps and the functions appropriately.
Indeed, the Carleson theorem on a.e.\@ convergence of Fourier series of a square integrable function would follow
from (\ref{eqlpestimate}) if the estimate was extended to $p_1=2$.

We turn to the dyadic model (a.k.a.\@ Walsh model) of $\Lambda_2$, where the Calder\'{o}n-Zygmund kernel $K$ will be replaced by
$$ K_{\mathrm{W}}(s,t) := \sum_{k=0}^{\infty} 2^{2k} \mathbf{1}_{[0,2^{-k})}(s) \mathbf{h}_{[0,2^{-k})}(t) . $$
Here $\mathbf{1}_{I}$ denotes the characteristic function of an interval $I$,
while $\mathbf{h}_{I}$ is the $\mathrm{L}^{\infty}$ normalized Haar wavelet,
$$ \mathbf{h}_{I}:=\mathbf{1}_{\mathrm{left}\,\mathrm{half}\,\mathrm{of}\,I}-\mathbf{1}_{\mathrm{right}\,\mathrm{half}\,\mathrm{of}\,I} . $$
For any $x,y\in[0,\infty)$ written in the binary system as
$x=\sum_{j\in\mathbb{Z}}x_j 2^{j}$, $y=\sum_{j\in\mathbb{Z}}y_j 2^{j}$ we set
$$ x\oplus y := \sum_{j\in\mathbb{Z}} \big((x_j + y_j) \ \mathrm{mod}\ 2\big) \ 2^{j} . $$
In that way the binary operation $\oplus\colon[0,\infty)\times[0,\infty)\to[0,\infty)$ is a.e.\@ well-defined.
Demeter considers the following trilinear form in \cite{Dem}:
\begin{equation}\label{eqmainform}
\Lambda_{\mathrm{W}}(F_1,F_2,F_3) := \sum_{k=0}^{\infty} \,2^{2k}
\int_{[0,1)^4}\! F_1(x\oplus s,y\oplus t) \,F_2(x\oplus t,y) \,F_3(x,y)
\mathbf{1}_{[0,2^{-k})}(s) \,\mathbf{h}_{[0,2^{-k})}(t) \,dx dy ds dt .
\end{equation}
The material in Section 8 of \cite{Dem} is a less technical variant of the proof from \cite{DT} and it establishes Estimate
(\ref{eqlpestimate}) for $\Lambda_{\mathrm{W}}$ in the same region of $(p_1,p_2,p_3)$.
In this paper we modestly expand the range of exponents by a single point, namely $(2,4,4)$.
This choice of exponents is interesting because of $p_1=2$ and the analogy with the previously mentioned continuous case.

\begin{theorem}\label{thmmaintheorem}
Trilinear form $\Lambda_{\mathrm{W}}$, defined by \emph{(\ref{eqmainform})} for dyadic step functions $F_1,F_2,F_3$,
satisfies the estimate
$$ |\Lambda_{\mathrm{W}}(F_1,F_2,F_3)| \,\leq\, 7 \,\|F_1\|_{\mathrm{L}^{2}([0,1)^2)}
\|F_2\|_{\mathrm{L}^{4}([0,1)^2)} \|F_3\|_{\mathrm{L}^{4}([0,1)^2)} . $$
\end{theorem}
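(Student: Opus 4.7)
My plan is to adapt the multilinear Bellman function technique of \cite{Kov1} to the time-frequency setting. I begin by making the sum over $k$ visible as a sum over dyadic squares by rewriting each scale contribution in terms of martingale/Haar quantities. After the change of variable $s\mapsto x\oplus s$, the $s$-integral becomes the average of $F_1$ in its first argument over the dyadic interval of length $2^{-k}$ containing $x$. After the change of variable $u=y\oplus t$, which uses the identity $x\oplus t=(x\oplus y)\oplus u$, the weight $\mathbf{h}_{[0,2^{-k})}(t)$ converts to a signed Haar kernel on the dyadic interval of length $2^{-k}$ containing $y$ and produces Haar-type differences of $F_1$ in the second argument and of $F_2$ in the first. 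Collecting, $\Lambda_{\mathrm{W}}$ becomes an absolutely convergent sum over dyadic squares $Q=I\times J\subset[0,1)^2$ (with $|I|=|J|=2^{-k}$) of trilinear expressions in local averages and Haar differences of $F_1,F_2,F_3$.

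Next I would introduce a Bellman function $B(\mathbf{u},\mathbf{v})$ of six variables tracking local averages $u_i=\langle F_i\rangle_Q$ and local $p_i$-th moments $v_i=\langle|F_i|^{p_i}\rangle_Q$ with $(p_1,p_2,p_3)=(2,4,4)$, defined on the domain $\{|u_i|^{p_i}\leq v_i\}$. I would require the size estimate $|B(\mathbf{u},\mathbf{v})|\leq 7\,v_1^{1/2}v_2^{1/4}v_3^{1/4}$ together with a local concavity-type inequality comparing $B$ on a parent square $Q$ to its average on the four children, whose slack dominates the absolute value of the scale-$k$ contribution at $Q$ produced in the first step. A natural candidate has the form $B(\mathbf{u},\mathbf{v})=7\,v_1^{1/2}v_2^{1/4}v_3^{1/4}-(\textrm{lower-order correction})$, where the correction is selected so that a single Cauchy--Schwarz followed by AM--GM absorbs the scale-$k$ contribution into the discrete Hessian of $B$. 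A routine induction over dyadic scales then yields
$$ |\Lambda_{\mathrm{W}}(F_1,F_2,F_3)|\leq B\bigl(\langle F_i\rangle_{[0,1)^2},\,\langle|F_i|^{p_i}\rangle_{[0,1)^2}\bigr)\leq 7\,\|F_1\|_{\mathrm{L}^2}\|F_2\|_{\mathrm{L}^4}\|F_3\|_{\mathrm{L}^4}, $$
the last step being the size estimate together with Jensen's inequality.

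The main obstacle will be the Bellman function construction at the endpoint $p_1=2$. Since $(2,4,4)$ sits precisely on the boundary of the range from \cite{Dem}, the concavity inequality must match the scale-$k$ contribution sharply---any slack is fatal. The asymmetry of the form (two Haar differences land on the first coordinate, exactly where $F_1$ has been averaged) suggests that a single Cauchy--Schwarz in the scale-$k$ contribution reduces matters to a positive quadratic form in local moments of $F_2$ and $F_3$; verifying that the resulting bilinear pairing integrates back against the $\mathrm{L}^2$ norm of $F_1$ with constant exactly $7$ is where the bulk of the computation would live.
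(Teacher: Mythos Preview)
There is a genuine gap: your decomposition step is incorrect, and it hides precisely the structure that makes $\Lambda_{\mathrm{W}}$ a time-frequency object rather than a pure martingale object. After the substitutions $x_1=x\oplus s$, $x_2=x\oplus t$, $x_3=x$, the first function becomes $F_1(x_1,\,y\oplus x_2\oplus x_3)$; its second argument couples $y$ with the $x$-variables carrying $F_2,F_3$. The Haar weight in $t$ therefore does \emph{not} split into a Haar difference of $F_1$ in its second variable times a Haar difference of $F_2$ in its first. To decouple, one must expand $F_1(x_1,\cdot)$ on $J$ in Walsh wave packets $w_{J,n}$, and via $w_{J,n}(y\oplus x_2\oplus x_3)=w_{J,n}(y)\,w_{I,n}(x_2)\,w_{I,n}(x_3)$ the frequency parameter $n$ migrates onto $F_2,F_3$, producing wave packet coefficients $[F_j(x,y)]_{x,I,n\oplus 1}$ at \emph{all} frequencies $n$, not just $n=0,1$. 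The scale-$k$ contribution at a square $I\times J$ is thus a full sum over $n$, and it cannot be expressed in terms of the six local quantities $\langle F_i\rangle_Q$, $\langle |F_i|^{p_i}\rangle_Q$ you propose to track.

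Consequently a Bellman function depending only on position and scale cannot absorb the local term. The paper's point is exactly that the Bellman functions must live on \emph{tiles} $I\times J\times\Omega$ in the $\frac{3}{2}$-dimensional phase space: they are built from wave packet coefficients such as $[[F_1]_{x,I,0}]_{y,J\times\Omega}$ and $[[F_2]_{x,I\times\Omega}[F_3]_{x,I\times\Omega}]_{y,J\times\Omega}$, and the first-order difference operator $\Box$ compares four spatial children $I_\alpha\times J_\beta\times\Omega$ against two frequency children $I\times J\times\Omega_\gamma$. The telescoping then runs from the finest scale $k=M$ (all frequency in one interval) down to $k=0$ (all space in one interval), and \emph{both} boundary terms contribute: each is bounded by $\frac{7}{2}$, giving the constant $7$. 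Your outline, with a single boundary evaluation at the unit square and no frequency variable in $B$, misses this mechanism entirely.
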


The emphasis is on the method we use and on simplicity of the proof, avoiding any tree decomposition or tree selection algorithms.
First, we decompose $\Lambda_{\mathrm{W}}$ in the $\frac{3}{2}$-dimensional phase space, invented by Demeter and Thiele
in \cite{DT} for the same purpose.
Then we adapt the multilinear Bellman function technique from \cite{Kov1} to this time-frequency setting.

We have to remark that Theorem \ref{thmmaintheorem} does not imply a.e.\@ convergence of Walsh-Fourier series
of a function in $\mathrm{L}^{2}([0,1))$, originally shown by Billard in \cite{Bil}.
To reprove that result, one would have to establish the same estimate for a trilinear form obtained by
inserting absolute values around each term of the time-frequency decomposition of $\Lambda_{\mathrm{W}}$.
However, such proof would probably require techniques used in \cite{DT},
so it is not very likely that this short self-contained argument would still apply.
On the other hand, $\Lambda_{\mathrm{W}}$ is still a sufficiently nontrivial object and any result on its boundedness almost certainly
requires some time-frequency analysis.

Our approach is mainly inspired by an unpublished proof by F. Nazarov of the $(\mathrm{L}^3,\mathrm{L}^3,\mathrm{L}^3)$ bound
for the one-dimensional bilinear Hilbert transform in characteristic $3$.
There is also some similarity with ``time-frequency swapping'' from the proof of the nonlinear Plancherel inequality
by Muscalu, Tao, and Thiele in \cite{MTT}.
The motivation for writing this note lies in scarcity of examples of proofs where one constructs a Bellman
function that also depends on the frequency, besides the position and the scale as usually.

\section{Time-frequency decomposition of the form}

Let $\mathcal{D}_{+}$ denote the family of \emph{dyadic intervals} in $[0,\infty)$, i.e.
$$ \mathcal{D}_{+} := \big\{ \big[2^{-k}\ell,2^{-k}(\ell+1)\big) \,:\, k,\ell\in\mathbb{Z}, \ \ell\geq 0 \big\} . $$
For each $I\in\mathcal{D}_{+}$ its left half will be denoted $I_0$ and its right half will be denoted $I_1$.
These are also dyadic intervals.
A \emph{dyadic step function} on $\mathbb{R}$ will simply be a finite linear combination of characteristic functions of intervals from $\mathcal{D}_{+}$.
Similarly, a dyadic step function on $\mathbb{R}^2$ means a finite linear combination of characteristic functions of dyadic rectangles,
i.e.\@ rectangles with sides in $\mathcal{D}_{+}$.

Take a dyadic interval $I=\big[2^{-k}\ell,2^{-k}(\ell+1)\big)\in\mathcal{D}_{+}$ and a nonnegative integer $n=\sum_{j=0}^{m}n_j 2^{j}$.
We can define the ($\mathrm{L}^\infty$-normalized) \emph{Walsh wave packet} $w_{I,n}$ directly by
$$ w_{I,n}(x) := (-1)^{\sum_{j=0}^{m} n_j x_{-j-k-1}} \mathbf{1}_{I}(x) , $$
where $x=\sum_{j\in\mathbb{Z}}x_j 2^{j}$.
If we also denote $\Omega=\big[2^{k}n,2^{k}(n+1)\big)$, we can associate the Walsh wave packet to the rectangle $I\times\Omega$ of area $1$,
and write $w_{I\times\Omega}$ instead of $w_{I,n}$.
Immediate properties of this system are:
\begin{itemize}
\item[(W1)]
$w_{I,n_1}(x) \,w_{I,n_2}(x) = w_{I,n_1\oplus n_2}(x)$ \
for $I\in\mathcal{D}_{+}$, for nonnegative integers $n_1,n_2$, and for a.e.\@ $x\in\mathbb{R}$.
\item[(W2)]
$w_{I,n}(x) \,w_{J,n}(y) = w_{I\oplus J,n}(x\oplus y)$ \
for $I,J\in\mathcal{D}_{+}$ such that $|I|=|J|$, for a nonnegative integer $n$, and for almost all $x\in I$, $y\in J$.
Here we denote $I\oplus J:=\big[2^{-k}\ell,2^{-k}(\ell+1)\big)$ if $I=\big[2^{-k}\ell_1,2^{-k}(\ell_1+1)\big)$,
$J=\big[2^{-k}\ell_2,2^{-k}(\ell_2+1)\big)$, and $\ell=\ell_1\oplus\ell_2$.
\item[(W3)]
$\sum_{n=0}^{\infty}\big(|I|^{-1}\!\int_{I}f(t)w_{I,n}(t)dt\big)\,w_{I,n}(x)=f(x)$ \
for $I\in\mathcal{D}_{+}$, for any dyadic step function $f$ supported on $I$, and for a.e.\@ $x\in I$.
This is the Walsh-Fourier expansion of $f$ and the sum is effectively finite, i.e.\@ only finitely many terms are nonzero.
\item[(W4)]
$w_{I\times\Omega_{0}}=w_{I_{0}\times\Omega}+w_{I_{1}\times\Omega}$
\ and \ $w_{I\times\Omega_{1}}=w_{I_{0}\times\Omega}-w_{I_{1}\times\Omega}$ \
for any $I,\Omega\in\mathcal{D}_{+}$ such that $|I| |\Omega|=2$.
\item[(W5)]
$w_{I,0}=\mathbf{1}_{I}$ and $w_{I,1}=\mathbf{h}_{I}$ for any $I\in\mathcal{D}_{+}$.
\end{itemize}
One can consult \cite{Thi} for more details on Walsh wave packets.
The only modification is that we prefer to normalize the system in $\mathrm{L}^\infty$, rather
than $\mathrm{L}^2$, which is more common.

Let us also introduce the notation
$$ [f(x)]_{x,I\times\Omega} = [f(x)]_{x,I,n} := \frac{1}{|I|}\int_{I} f(x) w_{I,n}(x) dx $$
for a locally integrable function $f$ and for $I=\big[2^{-k}\ell,2^{-k}(\ell+1)\big)$, $\Omega=\big[2^{k}n,2^{k}(n+1)\big)$,
$k,\ell,n\in\mathbb{Z}$, \,$\ell,n\geq 0$.
In particular, $[f(x)]_{x,I,0}$ is the ordinary average of $f$ on $I$.
We want to emphasize notationally in which variable we are averaging, because we will be dealing with expressions in several variables.

In the rest of the paper we fix three real-valued dyadic step functions $F_1,F_2,F_3$ supported on $[0,1)^2$.
Take a positive integer $M$ that is large enough so that $F_1,F_2,F_3$ are constant on dyadic squares with sides of length $2^{-M}$.
No estimates will depend on $M$ and we only use it to keep the arguments finite.
Finally, since the estimate we are proving is homogenous, we can normalize the functions by
\begin{equation}\label{eqnormalize}
\|F_1\|_{\mathrm{L}^2} = 1, \quad \|F_2\|_{\mathrm{L}^4} = \|F_3\|_{\mathrm{L}^4} = 1 .
\end{equation}

We begin to decompose $\Lambda_{\mathrm{W}}$ by breaking the integrals over $[0,1)$ in $x$ and $y$
into integrals over dyadic intervals of length $2^{-k}$.
Then we change $\mathbf{h}_{[0,2^{-k})}$ to $w_{[0,2^{-k}),1}$, as in (W5):
\begin{align*}
\Lambda_{\mathrm{W}}(F_1,F_2,F_3) = \sum_{k=0}^{\infty} \,2^{2k}
\!\!\!\!\!\sum_{\substack{I,J\in\mathcal{D}_{+},\ I,J\subseteq[0,1)\\ |I|=|J|=2^{-k}}}
& \int_{\mathbb{R}^4} F_1(x\oplus s,y\oplus t) \,F_2(x\oplus t,y) \,F_3(x,y) \\[-7mm]
& \qquad \mathbf{1}_{I}(x) \mathbf{1}_{J}(y) \mathbf{1}_{[0,2^{-k})}(s) w_{[0,2^{-k}),1}(t) \,dx dy ds dt .
\end{align*}
We substitute $x_1=x\oplus s$, \,$x_2=x\oplus t$, \,$x_3=x$, so that $s=x_1\oplus x_3$ and $t=x_2\oplus x_3$.
Note that translation \,$s\mapsto x\oplus s$\, preserves the Lebesgue measure and that it also preserves dyadic intervals of length $2^{-k}$.
Using these facts together with Property (W2) for $n=1$, we get
\begin{align*}
\Lambda_{\mathrm{W}}(F_1,F_2,F_3) = \sum_{k=0}^{M-1} \,2^{2k}
\!\!\!\!\sum_{\substack{I,J\in\mathcal{D}_{+},\ I,J\subseteq[0,1)\\ |I|=|J|=2^{-k}}}
&\int_{\mathbb{R}^4} F_1(x_1,y\oplus x_2\oplus x_3) F_2(x_2,y) F_3(x_3,y) \\[-7mm]
& \qquad w_{I,0}(x_1) w_{I,1}(x_2) w_{I,1}(x_3) w_{J,0}(y) \,dx_1 dx_2 dx_3 dy .
\end{align*}
By Property (W3) we can decompose
$$ F_1(x_1,y\oplus x_2\oplus x_3)  = \sum_{n=0}^{2^{M}\!|J|-1} \vkqq{F_1(x_1,y_1)}_{y_1,J,n} w_{J,n}(y\oplus x_2\oplus x_3),
\quad x_1,x_2,x_3\in I,\ \ y\in J . $$
Here we recall that $\vkq{F_1(x_1,y_1)}_{y_1,J,n}$ vanishes when $n\geq 2^{M}|J|$,
because of our assumptions on $F_1,F_2,F_3$.
An application of (W1) and (W2) gives
$$ w_{J,n}(y\oplus x_2\oplus x_3) w_{I,0}(x_1) w_{I,1}(x_2) w_{I,1}(x_3) w_{J,0}(y)
= w_{I,0}(x_1) w_{I,n\oplus 1}(x_2) w_{I,n\oplus 1}(x_3) w_{J,n}(y) , $$
and since $|I\times J|=2^{-2k}$, the form becomes
\begin{align}\label{eqmainform2}
\Lambda_{\mathrm{W}}(F_1,F_2,F_3) = \!\!\!\sum_{\substack{I,J\in\mathcal{D}_{+}\\ I,J\subseteq[0,1)\\ |I|=|J|\geq 2^{-M+1}}}
\!\!\!\sum_{0\leq n<2^{M}\!|I|} \ & |I\times J| \,\vkqqq{\vkqq{F_1(x_1,y_1)}_{x_1,I,0}\!}_{y_1,J,n} \nonumber \\[-8mm]
& \times\vkqqq{\vkqq{F_2(x_2,y)}_{x_2,I,n\oplus 1} \vkqq{F_3(x_3,y)}_{x_3,I,n\oplus 1}\!}_{y,J,n} .
\end{align}

\section{Time-frequency Bellman functions}

A rectangular cuboid $I\times J\times \Omega$ formed by $I,J,\Omega\in\mathcal{D}_{+}$ will be called
a \emph{tile} if $|I|=|J|=|\Omega|^{-1}$ and a \emph{multitile} if $|I|=|J|=2|\Omega|^{-1}$.
We will only be interested in tiles and multitiles contained in $[0,1)^2\!\times\![0,2^M)$.
Therefore we define the collections:
\begin{align*}
& \mathcal{T}_{k} :=\big\{I\!\times\!J\!\times\!\Omega\ \textrm{ is a tile }
: \ I,J\subseteq[0,1),\ \Omega\subseteq[0,2^M),\ |I|=2^{-k}\big\}, \quad \mathcal{T} := {\textstyle\bigcup_{k=0}^{M}}\mathcal{T}_{k},\\
& \mathcal{M}_{k} :=\big\{I\!\times\!J\!\times\!\Omega\ \textrm{ is a multitile }
: \ I,J\subseteq[0,1),\ \Omega\subseteq[0,2^M),\ |I|=2^{-k}\big\}, \quad \mathcal{M} := {\textstyle\bigcup_{k=0}^{M-1}}\mathcal{M}_{k}.
\end{align*}
A multitile $P=I\times J\times \Omega$ can be divided ``horizontally'' into four tiles
$$ P_{0,0}:=I_0\times J_0\times \Omega,
\ \ P_{0,1}:=I_0\times J_1\times \Omega,
\ \ P_{1,0}:=I_1\times J_0\times \Omega,
\ \ P_{1,1}:=I_1\times J_1\times \Omega $$
and ``vertically'' into two tiles
$$ P^0:=I\times J\times\Omega_0,
\ \ P^1:=I\times J\times\Omega_1. $$
For any function of tiles $\mathcal{B}\colon\mathcal{T}\to\mathbb{R}$ we define its \emph{first-order difference}
as a function of multitiles $\Box\mathcal{B}\colon\mathcal{M}\to\mathbb{R}$ given by the formula
$$ (\Box\mathcal{B})(P) := \frac{1}{4}\sum_{\alpha,\beta\in\{0,1\}}\!\mathcal{B}(P_{\alpha,\beta})
- \sum_{\gamma\in\{0,1\}}\!\mathcal{B}(P^{\gamma}) . $$

Note that (\ref{eqmainform2}) can now be rewritten as
\begin{equation}\label{eqarewritten}
\Lambda_{\mathrm{W}} = \!\sum_{I\times J\times \Omega\in\mathcal{M}}\!|I\times J|\ \mathcal{A}(I\times J\times \Omega) ,
\end{equation}
where $\mathcal{A}\colon\mathcal{M}\to\mathbb{R}$ is given by
$$ \mathcal{A}(I\!\times\! J\!\times\! \Omega)
:= \sum_{\gamma\in\{0,1\}} \vkqqq{\vkqq{F_1(x,y)}_{x,I,0}}_{y,J\times\Omega_{\gamma}}
\vkqqq{\vkqq{F_2(x,y)}_{x,I\times\Omega_{\gamma\oplus 1}}
\vkqq{F_3(x,y)}_{x,I\times\Omega_{\gamma\oplus 1}}}_{y,J\times\Omega_{\gamma}}. $$
This follows from the fact that $n$ and $n\oplus 1$ are two consecutive nonnegative integers and the smaller one is even,
so the union of
\,$I\times J\times\big[{\textstyle\frac{n}{|I|}},{\textstyle\frac{n+1}{|I|}}\big)$ and
$I\times J\times\big[{\textstyle\frac{n\oplus 1}{|I|}},{\textstyle\frac{(n\oplus 1)+1}{|I|}}\big)$\,
constitutes a multitile from $\mathcal{M}$.

We will be writing $I\times J\times \Omega$ both for a generic tile and for a generic multitile.
However, it will always be clear if the domain of the function in question is $\mathcal{T}$ or $\mathcal{M}$.
In order to control $\mathcal{A}$ we need to introduce several relevant expressions.
Define \,$\mathcal{B}_i\colon\mathcal{T}\to\mathbb{R}$, \,$i=1,2,3,4,5$\, by
{\allowdisplaybreaks\begin{align*}
\mathcal{B}_1(I\!\times\! J\!\times\! \Omega) &
:= \vkqqq{\vkqq{F_1(x,y)}_{x,I,0}}_{y,J\times\Omega}
\vkqqq{\vkqq{F_2(x,y)}_{x,I\times\Omega} \vkqq{F_3(x,y)}_{x,I\times\Omega}}_{y,J\times\Omega}, \\
\mathcal{B}_2(I\!\times\! J\!\times\! \Omega) &
:= \vkqqq{\vkqq{F_1(x,y)}_{x,I,0}}_{y,J\times\Omega}^2, \\
\mathcal{B}_3(I\!\times\! J\!\times\! \Omega) &
:= \vkqqq{\vkqq{F_2(x,y)}_{x,I\times\Omega} \vkqq{F_3(x,y)}_{x,I\times\Omega}}_{y,J\times\Omega}^2, \\
\mathcal{B}_4(I\!\times\! J\!\times\! \Omega) &
:= \vkqqq{\vkqq{F_2(x,y)}_{x,I\times\Omega}^2}_{y,J,0}^2, \\
\mathcal{B}_5(I\!\times\! J\!\times\! \Omega) &
:= \vkqqq{\vkqq{F_3(x,y)}_{x,I\times\Omega}^2}_{y,J,0}^2.
\end{align*}}
We remark that the scope of each averaging variable (i.e.\@ $x$ or $y$) is only inside the corresponding bracket.
Finally, denote
$$ \mathcal{B}_{-}:=\mathcal{B}_{1}-\mathcal{B}_{2}-{\textstyle\frac{1}{2}}\mathcal{B}_{3}
-{\textstyle\frac{1}{2}}\mathcal{B}_{4}-{\textstyle\frac{1}{2}}\mathcal{B}_{5}\quad\textrm{and}\quad
\mathcal{B}_{+}:=\mathcal{B}_{1}+\mathcal{B}_{2}+{\textstyle\frac{1}{2}}\mathcal{B}_{3}
+{\textstyle\frac{1}{2}}\mathcal{B}_{4}+{\textstyle\frac{1}{2}}\mathcal{B}_{5} . $$
The key local estimate is contained in the following lemma.

\begin{lemma}\label{lmlemmaineq}
For every \,$I\times J\times \Omega\in\mathcal{M}$ we have
$$ \Box\mathcal{B}_{-}(I\!\times\! J\!\times\! \Omega) \,\leq\, \mathcal{A}(I\!\times\! J\!\times\! \Omega)
\,\leq\, \Box\mathcal{B}_{+}(I\!\times\! J\!\times\! \Omega) . $$
\end{lemma}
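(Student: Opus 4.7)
My plan is to prove the two inequalities of the lemma simultaneously, in the equivalent form
\[
|\mathcal{A}(P)-\Box\mathcal{B}_1(P)| \;\leq\; \Box\mathcal{B}_2(P)+\tfrac{1}{2}\Box\mathcal{B}_3(P)+\tfrac{1}{2}\Box\mathcal{B}_4(P)+\tfrac{1}{2}\Box\mathcal{B}_5(P),
\]
which is valid since $\mathcal{B}_\pm=\mathcal{B}_1\pm(\mathcal{B}_2+\tfrac{1}{2}\mathcal{B}_3+\tfrac{1}{2}\mathcal{B}_4+\tfrac{1}{2}\mathcal{B}_5)$. The first move is to bring every tile average appearing in the lemma down to a common finest-scale data set. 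With $P=I\times J\times\Omega$ I write $U_\alpha(y):=[F_1(x,y)]_{x,I_\alpha,0}$, $V_\alpha(y):=[F_2(x,y)]_{x,I_\alpha\times\Omega}$ and $W_\alpha(y):=[F_3(x,y)]_{x,I_\alpha\times\Omega}$ for $\alpha\in\{0,1\}$, and I let $n$ denote the frequency index of $\Omega$ viewed as an interval of length $2|I|^{-1}$. Property (W4) then gives $[F_2(x,y)]_{x,I\times\Omega_\gamma}=\tfrac{1}{2}(V_0+(-1)^\gamma V_1)$ and the same formula for $F_3$, so each of the quantities $\mathcal{B}_i(P^\gamma)$ and $\mathcal{B}_i(P_{\alpha,\beta})$ becomes an explicit polynomial in $U_\alpha,V_\alpha,W_\alpha$ and in their ordinary or Walsh-weighted $J_\beta$-averages with index $n$.

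The central step is the algebraic identity
\[
\Box\mathcal{B}_1(P)-\mathcal{A}(P) \;=\; \tfrac{1}{8}\sum_{\beta\in\{0,1\}}\bigl(p_{0,\beta}-p_{1,\beta}\bigr)\bigl(q_{0,0,\beta}-q_{1,1,\beta}\bigr),
\]
where $p_{\alpha,\beta}:=[U_\alpha]_{y,J_\beta,n}$ and $q_{\alpha,\alpha,\beta}:=[V_\alpha W_\alpha]_{y,J_\beta,n}$. To derive it I start from the swap-undoing relation $v_{P^{\gamma\oplus 1}}w_{P^{\gamma\oplus 1}}+v_{P^\gamma}w_{P^\gamma}=\tfrac{1}{2}(V_0W_0+V_1W_1)$, which rewrites $\mathcal{A}(P)$ as $\sum_\gamma\mathcal{B}_1(P^\gamma)$ plus a $\gamma$-antisymmetric correction. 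Applying the Haar-martingale identity $\sum_\gamma \phi^\gamma\psi^\gamma=\tfrac{1}{2}(\phi_0\psi_0+\phi_1\psi_1)$ and matching with the four-term spatial sum $\tfrac{1}{4}\sum_{\alpha,\beta}\mathcal{B}_1(P_{\alpha,\beta})$ gives the display. A short parallel computation yields $\Box\mathcal{B}_2(P)=\tfrac{1}{8}\sum_\beta(p_{0,\beta}-p_{1,\beta})^2$ and the clean positive decomposition
\[
\Box\mathcal{B}_4(P) \;=\; \sum_\beta\mathcal{V}^0_\beta\mathcal{V}^1_\beta+\tfrac{1}{4}\sum_\gamma(\mathcal{V}^\gamma_0-\mathcal{V}^\gamma_1)^2+2\sum_\beta(\mathcal{V}^{01}_\beta)^2 \;\geq\; 0,
\]
where $\mathcal{V}^\gamma_\beta:=[v_{P^\gamma}^2]_{y,J_\beta,0}$ and $\mathcal{V}^{01}_\beta:=[v_{P^0}v_{P^1}]_{y,J_\beta,0}$; the analogous formula is valid for $\Box\mathcal{B}_5$ with $W$ in place of $V$.

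I next apply the polarization $V_0W_0-V_1W_1=2(v_{P^0}w_{P^1}+v_{P^1}w_{P^0})$ to obtain $q_{0,0,\beta}-q_{1,1,\beta}=2\hat{R}_\beta$, with $\hat{R}_\beta:=[v_{P^0}w_{P^1}]_{y,J_\beta,n}+[v_{P^1}w_{P^0}]_{y,J_\beta,n}$, and feed the AM-GM inequality $|ab|\leq\tfrac{1}{2}a^2+\tfrac{1}{2}b^2$ into each $\beta$-summand of the identity for $\Box\mathcal{B}_1-\mathcal{A}$. This yields the intermediate bound
\[
|\Box\mathcal{B}_1(P)-\mathcal{A}(P)| \;\leq\; \Box\mathcal{B}_2(P)+\tfrac{1}{8}\sum_\beta\hat{R}_\beta^2,
\]
and reduces the lemma to the single remaining inequality
\[
\tfrac{1}{4}\sum_\beta\hat{R}_\beta^2 \;\leq\; \Box\mathcal{B}_3(P)+\Box\mathcal{B}_4(P)+\Box\mathcal{B}_5(P).
\]

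The hard step is this last inequality. A direct expansion of $\Box\mathcal{B}_3(P)$ in terms of $\hat{q}^\gamma_\beta:=[v_{P^\gamma}w_{P^\gamma}]_{y,J_\beta,n}$ and $\hat{R}_\beta$ does isolate a term $\tfrac{1}{2}\sum_\beta\hat{R}_\beta^2$ with the correct sign, but it is accompanied by a sign-indefinite quadratic form in the $\hat{q}^\gamma_\beta$ that $\Box\mathcal{B}_3$ alone cannot absorb. My plan to dispose of that obstruction is to use the Walsh Cauchy-Schwarz bound $(\hat{q}^\gamma_\beta)^2\leq\mathcal{V}^\gamma_\beta\mathcal{W}^\gamma_\beta$ together with AM-GM to dominate the bad quadratic by a combination of products $\mathcal{V}^\gamma_\beta\mathcal{V}^{\gamma'}_{\beta'}$ and $\mathcal{W}^\gamma_\beta\mathcal{W}^{\gamma'}_{\beta'}$, and then to identify each such product with one of the positive pieces in the clean decompositions of $\Box\mathcal{B}_4$ and $\Box\mathcal{B}_5$ above. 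Carrying out this cancellation so that the resulting weights match exactly $1,\tfrac12,\tfrac12,\tfrac12$ is the delicate computational point of the proof, and is what forces the choice of five Bellman quantities rather than fewer.
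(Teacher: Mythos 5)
Your strategy is essentially the paper's, rewritten in fine-scale variables: reduce the two-sided bound to $|\mathcal{A}-\Box\mathcal{B}_1|\le\Box\mathcal{B}_2+\frac12\Box\mathcal{B}_3+\frac12\Box\mathcal{B}_4+\frac12\Box\mathcal{B}_5$, identify $\Box\mathcal{B}_1-\mathcal{A}$ as a bilinear error, absorb one factor into $\Box\mathcal{B}_2$ by AM--GM, and control the remaining square by the positive expansions of $\Box\mathcal{B}_3,\Box\mathcal{B}_4,\Box\mathcal{B}_5$. The identities you state explicitly are correct: your $\tfrac18\sum_\beta(p_{0,\beta}-p_{1,\beta})(q_{0,0,\beta}-q_{1,1,\beta})$ is exactly the paper's error term $\mathcal{A}_1$ (transferred via (W4) from the frequency halves $\Omega_\gamma$ to the spatial halves $J_\beta$), your formula for $\Box\mathcal{B}_2$ and your decompositions of $\Box\mathcal{B}_4,\Box\mathcal{B}_5$ agree with the paper's after the same change of variables, and the polarization $V_0W_0-V_1W_1=2(v_{P^0}w_{P^1}+v_{P^1}w_{P^0})$ is right, so the intermediate bound $|\Box\mathcal{B}_1-\mathcal{A}|\le\Box\mathcal{B}_2+\tfrac18\sum_\beta\hat R_\beta^2$ is sound.

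The gap is that the step you yourself single out as the hard one, namely $\tfrac14\sum_\beta\hat R_\beta^2\le\Box\mathcal{B}_3+\Box\mathcal{B}_4+\Box\mathcal{B}_5$, is only announced as a plan: you never write out the expansion of $\Box\mathcal{B}_3$ nor perform the absorption, so as submitted the proof is incomplete. The plan does work, and more easily than you suggest. One finds $\Box\mathcal{B}_3=\tfrac12\sum_\beta\hat R_\beta^2+\sum_\beta\hat q^0_\beta\hat q^1_\beta$ plus nonnegative squares, and the only indefinite term is killed by exactly the bound you propose: $|\hat q^0_\beta\hat q^1_\beta|\le(\mathcal{V}^0_\beta\mathcal{V}^1_\beta)^{1/2}(\mathcal{W}^0_\beta\mathcal{W}^1_\beta)^{1/2}\le\tfrac12\mathcal{V}^0_\beta\mathcal{V}^1_\beta+\tfrac12\mathcal{W}^0_\beta\mathcal{W}^1_\beta$, while $\sum_\beta\mathcal{V}^0_\beta\mathcal{V}^1_\beta\le\Box\mathcal{B}_4$ and $\sum_\beta\mathcal{W}^0_\beta\mathcal{W}^1_\beta\le\Box\mathcal{B}_5$ by your own clean decompositions. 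This already gives $\Box\mathcal{B}_3+\Box\mathcal{B}_4+\Box\mathcal{B}_5\ge\tfrac12\sum_\beta\hat R_\beta^2$, with a factor of $2$ to spare; no delicate matching of the weights $1,\tfrac12,\tfrac12,\tfrac12$ remains at this stage, since those weights were already spent in the initial reduction. This absorption is precisely the paper's estimate $|\mathcal{A}_3|\le\mathcal{A}_4+\mathcal{A}_5$ in disguise, so once you write out these few lines your argument coincides with the published proof.
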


\begin{proof}
We need to compute the first-order difference of each $\mathcal{B}_i$, \,$i=1,2,3,4,5$.
Let us fix some \,$I\times J\times \Omega\in\mathcal{M}$.
Using Property (W4) alone, one can easily verify the following identity:
{\allowdisplaybreaks\begin{align*}
& 8\textstyle\sum_{\alpha,\beta\in\{0,1\}} w_{I_{\alpha},0}(x_1) w_{I_{\alpha}\times\Omega}(x_2) w_{I_{\alpha}\times\Omega}(x_3)
w_{J_{\beta}\times\Omega}(y_1) w_{J_{\beta}\times\Omega}(y_2) \\
& \quad - \textstyle\sum_{\gamma\in\{0,1\}} w_{I,0}(x_1) w_{I\times\Omega_\gamma}(x_2) w_{I\times\Omega_\gamma}(x_3)
w_{J\times\Omega_\gamma}(y_1) w_{J\times\Omega_\gamma}(y_2) \\
& = \textstyle\sum_{\gamma\in\{0,1\}} w_{I,0}(x_1) w_{I\times\Omega_{\gamma\oplus 1}}(x_2) w_{I\times\Omega_{\gamma\oplus 1}}(x_3)
w_{J\times\Omega_\gamma}(y_1) w_{J\times\Omega_\gamma}(y_2) \\
& \quad + \textstyle\sum_{\gamma,\delta\in\{0,1\}} w_{I,1}(x_1) w_{I\times\Omega_{\delta}}(x_2) w_{I\times\Omega_{\delta\oplus 1}}(x_3)
w_{J\times\Omega_\gamma}(y_1) w_{J\times\Omega_\gamma}(y_2) .
\end{align*}}
If we multiply it by $F_1(x_1,y_1) F_2(x_2,y_2) F_3(x_3,y_2)$, integrate in variables $x_1,x_2,x_3,y_1,y_2$ over $\mathbb{R}^5$,
and divide the obtained expression by $|I|^3 |J|^2$, we are left with
$$ (\Box\mathcal{B}_1)(I\!\times\! J\!\times\! \Omega) = \mathcal{A}(I\!\times\! J\!\times\! \Omega)
+ \underbrace{\sum_{\gamma,\delta\in\{0,1\}}
\!\!\vkqqq{\vkqq{F_1(x,y)}_{x,I,1}}_{y,J\times\Omega_{\gamma}} \vkqqq{\vkqq{F_2(x,y)}_{x,I\times\Omega_{\delta}}
\vkqq{F_3(x,y)}_{x,I\times\Omega_{\delta\oplus 1}}}_{y,J\times\Omega_{\gamma}}}_{\mathcal{A}_1(I\times J\times\Omega)} . $$
Similarly, if we start with
\begin{align*}
& 4\textstyle\sum_{\alpha,\beta\in\{0,1\}} \prod_{i\in\{1,2\}} w_{I_\alpha,0}(x_i) w_{J_{\beta}\times\Omega}(y_i)
- \textstyle\sum_{\gamma\in\{0,1\}} \prod_{i\in\{1,2\}} w_{I,0}(x_i) w_{J\times\Omega_{\gamma}}(y_i) \\
& = \textstyle\sum_{\gamma\in\{0,1\}} \prod_{i\in\{1,2\}} w_{I,1}(x_i) w_{J\times\Omega_{\gamma}}(y_i) ,
\end{align*}
multiply it by $F_1(x_1,y_1) F_1(x_2,y_2)$, integrate in $x_1,x_2,y_1,y_2$, and divide by $|I|^2 |J|^2$, we get
$$ (\Box\mathcal{B}_2)(I\!\times\! J\!\times\! \Omega) = \sum_{\gamma\in\{0,1\}}
\vkqqq{\vkqq{F_1(x,y)}_{x,I,1}}_{y,J\times\Omega_{\gamma}}^2 . $$
In exactly the same way, identity
{\allowdisplaybreaks\begin{align*}
& 16\textstyle\sum_{\alpha,\beta\in\{0,1\}} \big(\textstyle\prod_{i,j\in\{0,1\}} w_{I_{\alpha}\times\Omega}(x_{i,j})\big)
\big(\textstyle\prod_{i\in\{0,1\}} w_{J_\beta\times\Omega}(y_i)\big) \\
& \quad - \textstyle\sum_{\gamma\in\{0,1\}} \big(\textstyle\prod_{i,j\in\{0,1\}} w_{I\times\Omega_\gamma}(x_{i,j})\big)
\big(\textstyle\prod_{i\in\{0,1\}} w_{J\times\Omega_\gamma}(y_i)\big) \\
& = \textstyle\sum_{\gamma,\delta\in\{0,1\}} \big(\textstyle\prod_{i,j\in\{0,1\}} w_{I\times\Omega_{\delta\oplus j}}(x_{i,j})\big)
\big(\textstyle\prod_{i\in\{0,1\}} w_{J\times\Omega_\gamma}(y_i)\big) \\
& \quad + 2\textstyle\sum_{\beta,\gamma,\delta\in\{0,1\}}
\big(\textstyle\prod_{i\in\{0,1\}} w_{I\times\Omega_{\gamma\oplus i}}(x_{i,0})\big)
\big(\textstyle\prod_{i\in\{0,1\}} w_{I\times\Omega_{\delta\oplus i}}(x_{i,1})\big)
\big(\textstyle\prod_{i\in\{0,1\}} w_{J_\beta\times\Omega}(y_i)\big) \\
& \quad + \textstyle\sum_{\gamma\in\{0,1\}} \big(\textstyle\prod_{i,j\in\{0,1\}} w_{I\times\Omega_{\gamma\oplus 1}}(x_{i,j})\big)
\big(\textstyle\prod_{i\in\{0,1\}} w_{J\times\Omega_\gamma}(y_i)\big)
\end{align*}}
multiplied by $F_2(x_{0,0},y_0) F_3(x_{0,1},y_0) F_2(x_{1,0},y_1) F_3(x_{1,1},y_1)$ and integrated in
\,$x_{0,0},x_{0,1},x_{1,0},x_{1,1},y_{0},y_{1}$ gives
{\allowdisplaybreaks\begin{align*}
(\Box\mathcal{B}_3)(I\!\times\! J\!\times\! \Omega) &
= \underbrace{\sum_{\gamma,\delta\in\{0,1\}} \vkqqq{\vkqq{F_2(x,y)}_{x,I\times\Omega_{\delta}}
\vkqq{F_3(x,y)}_{x,I\times\Omega_{\delta\oplus 1}}}_{y,J\times\Omega_{\gamma}}^2}_{\mathcal{A}_2(I\times J\times\Omega)} \\
& \quad + \underbrace{\sum_{\beta,\delta\in\{0,1\}} \prod_{\gamma\in\{0,1\}} \vkqqq{\vkqq{F_2(x,y)}_{x,I\times\Omega_{\gamma}}
\vkqq{F_3(x,y)}_{x,I\times\Omega_{\gamma\oplus\delta}}}_{y,J_{\beta}\times\Omega}}_{\mathcal{A}_3(I\times J\times\Omega)} \\
& \quad + \sum_{\gamma\in\{0,1\}} \underbrace{\vkqqq{\vkqq{F_2(x,y)}_{x,I\times\Omega_{\gamma\oplus 1}}
\vkqq{F_3(x,y)}_{x,I\times\Omega_{\gamma\oplus 1}}}_{y,J\times\Omega_{\gamma}}^2}_{\geq 0} ,
\end{align*}}
while identity
{\allowdisplaybreaks\begin{align*}
& 16\textstyle\sum_{\alpha,\beta\in\{0,1\}} \big(\textstyle\prod_{i,j\in\{0,1\}} w_{I_{\alpha}\times\Omega}(x_{i,j})\big)
\big(\textstyle\prod_{i\in\{0,1\}} w_{J_\beta,0}(y_i)\big) \\
& \quad - \textstyle\sum_{\gamma\in\{0,1\}} \big(\textstyle\prod_{i,j\in\{0,1\}} w_{I\times\Omega_\gamma}(x_{i,j})\big)
\big(\textstyle\prod_{i\in\{0,1\}} w_{J,0}(y_i)\big) \\
& = 2\textstyle\sum_{\beta,\gamma\in\{0,1\}} \big(\textstyle\prod_{i,j\in\{0,1\}} w_{I\times\Omega_{\gamma\oplus i}}(x_{i,j})\big)
\big(\textstyle\prod_{i\in\{0,1\}} w_{J_\beta,0}(y_i)\big) \\
& \quad + \textstyle\sum_{\gamma\in\{0,1\}} \big(\textstyle\prod_{i,j\in\{0,1\}} w_{I\times\Omega_\gamma}(x_{i,j})\big)
\big(\textstyle\prod_{i\in\{0,1\}} w_{J,1}(y_i)\big) \\
& \quad + \textstyle\sum_{\gamma,\delta,\epsilon\in\{0,1\}} \big(\textstyle\prod_{j\in\{0,1\}} w_{I\times\Omega_{\gamma\oplus j}}(x_{0,j})\big)
\big(\textstyle\prod_{j\in\{0,1\}} w_{I\times\Omega_{\delta\oplus j}}(x_{1,j})\big)
\big(\textstyle\prod_{i\in\{0,1\}} w_{J,\epsilon}(y_i)\big)
\end{align*}}
multiplied by $F_2(x_{0,0},y_0) F_2(x_{0,1},y_0) F_2(x_{1,0},y_1) F_2(x_{1,1},y_1)$ implies
{\allowdisplaybreaks\begin{align*}
(\Box\mathcal{B}_4)(I\!\times\! J\!\times\! \Omega) &
= \underbrace{\sum_{\beta\in\{0,1\}} \prod_{\gamma\in\{0,1\}}
\vkqqq{\vkqq{F_2(x,y)}_{x,I\times\Omega_\gamma}^2}_{y,J_{\beta},0}}_{\mathcal{A}_4(I\times J\times\Omega)}
+ \sum_{\gamma\in\{0,1\}} \underbrace{\vkqqq{\vkqq{F_2(x,y)}_{x,I\times\Omega_{\gamma}}^2}_{y,J,1}^2}_{\geq 0}\\
& \quad + 4\sum_{\epsilon\in\{0,1\}} \underbrace{\vkqqq{\vkqq{F_2(x,y)}_{x,I\times\Omega_{0}}
\vkqq{F_2(x,y)}_{x,I\times\Omega_{1}}}_{y,J,\epsilon}^2}_{\geq 0} .
\end{align*}}
The expression for $\Box\mathcal{B}_5$ has $F_2$ replaced with $F_3$:
{\allowdisplaybreaks\begin{align*}
(\Box\mathcal{B}_5)(I\!\times\! J\!\times\! \Omega) &
= \underbrace{\sum_{\beta\in\{0,1\}} \prod_{\gamma\in\{0,1\}}
\vkqqq{\vkqq{F_3(x,y)}_{x,I\times\Omega_\gamma}^2}_{y,J_{\beta},0}}_{\mathcal{A}_5(I\times J\times\Omega)}
+ \sum_{\gamma\in\{0,1\}} \underbrace{\vkqqq{\vkqq{F_3(x,y)}_{x,I\times\Omega_{\gamma}}^2}_{y,J,1}^2}_{\geq 0}\\
& \quad + 4\sum_{\epsilon\in\{0,1\}} \underbrace{\vkqqq{\vkqq{F_3(x,y)}_{x,I\times\Omega_{0}}
\vkqq{F_3(x,y)}_{x,I\times\Omega_{1}}}_{y,J,\epsilon}^2}_{\geq 0} .
\end{align*}}

Several terms that appear in $\Box\mathcal{B}_i$ were denoted $\mathcal{A}_j$,
while some terms were noted to be nonnegative, being squares of real numbers. Consequently,
\begin{equation}\label{eqbineq1}
\Box\mathcal{B}_{3}+\Box\mathcal{B}_{4}+\Box\mathcal{B}_{5}
\geq \mathcal{A}_{2}+\mathcal{A}_{3}+\mathcal{A}_{4}+\mathcal{A}_{5} .
\end{equation}
An immediate application of the inequality $|ab|\leq\frac{1}{2}a^2+\frac{1}{2}b^2$ gives
\begin{equation}\label{eqbineq2}
|\mathcal{A}-\Box\mathcal{B}_1| = |\mathcal{A}_1| \leq \Box\mathcal{B}_2 + {\textstyle\frac{1}{2}}\mathcal{A}_2 .
\end{equation}
We will also need the inequality $|[f(y)]_{y,J_{\beta}\times\Omega}|\leq[|f(y)|]_{y,J_{\beta},0}$ for any real-valued function $f$.
Using the Cauchy-Schwarz inequality we can estimate
\begin{align*}
|\mathcal{A}_3(I\!\times\! J\!\times\! \Omega)| &
\leq \sum_{\beta,\delta\in\{0,1\}} \prod_{\gamma\in\{0,1\}}
\vkqqqq{\big|\!\vkqq{F_2(x,y)}_{x,I\times\Omega_{\gamma}}\!\big|
\,\big|\!\vkqq{F_3(x,y)}_{x,I\times\Omega_{\gamma\oplus\delta}}\!\big|}_{y,J_{\beta},0} \\
& \leq 2\sum_{\beta\in\{0,1\}} \prod_{\gamma\in\{0,1\}}
\vkqqq{\vkqq{F_2(x,y)}_{x,I\times\Omega_\gamma}^{2}}_{y,J_\beta,0}^{1/2}
\vkqqq{\vkqq{F_3(x,y)}_{x,I\times\Omega_\gamma}^{2}}_{y,J_\beta,0}^{1/2} \\
& \leq\sum_{\beta\in\{0,1\}}\Big(\prod_{\gamma\in\{0,1\}}\vkqqq{\vkqq{F_2(x,y)}_{x,I\times\Omega_\gamma}^{2}}_{y,J_\beta,0}
+ \prod_{\gamma\in\{0,1\}}\vkqqq{\vkqq{F_3(x,y)}_{x,I\times\Omega_\gamma}^{2}}_{y,J_\beta,0}\Big) ,
\end{align*}
so that
\begin{equation}\label{eqbineq3}
|\mathcal{A}_3| \leq \mathcal{A}_4 + \mathcal{A}_5 .
\end{equation}
Combining (\ref{eqbineq1}),(\ref{eqbineq2}),(\ref{eqbineq3}) we finally get
$$ |\mathcal{A}-\Box\mathcal{B}_1| \leq \Box\mathcal{B}_{2}+{\textstyle\frac{1}{2}}\Box\mathcal{B}_{3}
+{\textstyle\frac{1}{2}}\Box\mathcal{B}_{4}+{\textstyle\frac{1}{2}}\Box\mathcal{B}_{5} , $$
which is what we needed to prove.
\end{proof}

\section{Completing the proof}

Let us introduce two auxiliary expressions
$$ \Xi_{k}^{+} := \sum_{I\times J\times \Omega\in\mathcal{T}_{k}}\!\!|I\times J|\ \,\mathcal{B}_{+}(I\times J\times \Omega) , \qquad
\Xi_{k}^{-} := \sum_{I\times J\times \Omega\in\mathcal{T}_{k}}\!\!|I\times J|\ \,\mathcal{B}_{-}(I\times J\times \Omega) $$
for $k=0,1,2,\ldots,M$.
By definition of the operator $\Box$ we have
\begin{align*}
\Xi_{k+1}^{\pm} - \Xi_{k}^{\pm}
& = \sum_{I\times J\times \Omega\in\mathcal{M}_{k}}\!\!\!
\Big(\sum_{\alpha,\beta\in\{0,1\}}\!\!|I_\alpha\!\times J_\beta|\ \mathcal{B}_{\pm}(I_{\alpha}\!\times J_{\beta}\!\times \Omega)
-\sum_{\gamma\in\{0,1\}}\!\!|I\times J|\ \mathcal{B}_{\pm}(I\times J\times \Omega_{\gamma})\Big) \\
& = \sum_{I\times J\times \Omega\in\mathcal{M}_{k}}\!|I\times J|\ (\Box\mathcal{B}_{\pm})(I\times J\times \Omega) .
\end{align*}
Summing for $k=0,1,\ldots,M\!-\!1$, using (\ref{eqarewritten}), and applying Lemma \ref{lmlemmaineq} we conclude
\begin{equation}\label{eqaestimated}
\Xi_{M}^{-} - \Xi_{0}^{-} \leq \Lambda_{\mathrm{W}} \leq \Xi_{M}^{+} - \Xi_{0}^{+} .
\end{equation}

It remains to estimate $\Xi_{M}^{\pm}$ and $\Xi_{0}^{\pm}$.
Recall that all cuboids $I\times J\times\Omega\in\mathcal{T}_{M}$ have the ``frequency interval'' $\Omega$ equal to $[0,2^M)$
and that $F_1,F_2,F_3$ are constant on the squares $I\times J$. Therefore,
\begin{align*}
\Xi_{M}^{\pm} & = \int_{[0,1)^2} \!\!F_1(x,y)F_2(x,y)F_3(x,y) \,dx dy \\
& \quad\pm\int_{[0,1)^2} \!\!\big( F_1(x,y)^2 + {\textstyle\frac{1}{2}}F_2(x,y)^2 F_3(x,y)^2
+ {\textstyle\frac{1}{2}}F_2(x,y)^4 + {\textstyle\frac{1}{2}}F_3(x,y)^4 \big) \,dx dy ,
\end{align*}
so H\"{o}lder's inequality and (\ref{eqnormalize}) imply
\begin{equation}\label{eqksi1}
|\Xi_{M}^{\pm}| \,\leq\, \|F_1\|_{\mathrm{L}^{2}} \|F_2\|_{\mathrm{L}^{4}} \|F_3\|_{\mathrm{L}^{4}}
+ \|F_1\|_{\mathrm{L}^{2}}^2 + {\textstyle\frac{1}{2}}\|F_2\|_{\mathrm{L}^{4}}^2 \|F_3\|_{\mathrm{L}^{4}}^2
+ {\textstyle\frac{1}{2}}\|F_2\|_{\mathrm{L}^{4}}^4 + {\textstyle\frac{1}{2}}\|F_3\|_{\mathrm{L}^{4}}^4 \,=\, {\textstyle\frac{7}{2}} .
\end{equation}
On the other hand, all cuboids in $\mathcal{T}_{0}$ have ``time intervals'' $I,J$ equal to $[0,1)$, so
{\allowdisplaybreaks\begin{align*}
\Xi_{0}^{\pm} & =
\sum_{n=0}^{2^M-1} \vkqqq{\vkqq{F_1(x,y)}_{x,[0,1),0}}_{y,[0,1),n}
\vkqqq{\vkqq{F_2(x,y)}_{x,[0,1),n} \vkqq{F_3(x,y)}_{x,[0,1),n}}_{y,[0,1),n} \\[-1mm]
& \quad\pm\sum_{n=0}^{2^M-1}
\Big(\vkqqq{\vkqq{F_1(x,y)}_{x,[0,1),0}}_{y,[0,1),n}^2
+ {\textstyle\frac{1}{2}}\vkqqq{\vkqq{F_2(x,y)}_{x,[0,1),n} \vkqq{F_3(x,y)}_{x,[0,1),n}}_{y,[0,1),n}^2 \\[-3.5mm]
& \qquad\qquad\quad + {\textstyle\frac{1}{2}}\vkqqq{\vkqq{F_2(x,y)}_{x,[0,1),n}^2}_{y,[0,1),0}^2
+ {\textstyle\frac{1}{2}}\vkqqq{\vkqq{F_3(x,y)}_{x,[0,1),n}^2}_{y,[0,1),0}^2 \Big) .
\end{align*}}
By (W2) and (W3),
{\allowdisplaybreaks\begin{align*}
\Xi_{0}^{\pm} & =
\int_{[0,1)^4} \!\!F_1(x_1,x_2\!\oplus\! x_3\!\oplus\! y) F_2(x_2,y) F_3(x_3,y) \,dx_1 dx_2 dx_3 dy \\
& \quad\pm\Big(
\int_{[0,1)^3} \!\!F_1(x_1,y) F_1(x_2,y) \,dx_1 dx_2 dy \\
& \qquad\quad + \frac{1}{2}\int_{[0,1)^5} F_2(x_1,y) F_2(x_2,x_1\!\oplus\! x_2\!\oplus\! x_3\!\oplus\! x_4\!\oplus\! y) \\[-3mm]
& \qquad\qquad\qquad\quad\ \ F_3(x_3,y) F_3(x_4,x_1\!\oplus\! x_2\!\oplus\! x_3\!\oplus\! x_4\!\oplus\! y)
\,dx_1 dx_2 dx_3 dx_4 dy \\
& \qquad\quad + \frac{1}{2}\int_{[0,1)^5} \!\!F_2(x_1,y_1) F_2(x_2,y_1) F_2(x_3,y_2)
F_2(x_1\!\oplus\! x_2\!\oplus\! x_3,y_2)\,dx_1 dx_2 dx_3 dy_1 dy_2 \\
& \qquad\quad + \frac{1}{2}\int_{[0,1)^5} \!\!F_3(x_1,y_1) F_3(x_2,y_1)
F_3(x_3,y_2) F_3(x_1\!\oplus\! x_2\!\oplus\! x_3,y_2)
\,dx_1 dx_2 dx_3 dy_1 dy_2 \Big) .
\end{align*}}
Once again, we can use H\"{o}lder's inequality and (\ref{eqnormalize}) to conclude
\begin{equation}\label{eqksi2}
|\Xi_{0}^{\pm}| \,\leq\, \|F_1\|_{\mathrm{L}^{2}} \|F_2\|_{\mathrm{L}^{4}} \|F_3\|_{\mathrm{L}^{4}}
+ \|F_1\|_{\mathrm{L}^{2}}^2 + {\textstyle\frac{1}{2}}\|F_2\|_{\mathrm{L}^{4}}^2 \|F_3\|_{\mathrm{L}^{4}}^2
+ {\textstyle\frac{1}{2}}\|F_2\|_{\mathrm{L}^{4}}^4 + {\textstyle\frac{1}{2}}\|F_3\|_{\mathrm{L}^{4}}^4 \,=\, {\textstyle\frac{7}{2}} .
\end{equation}
In the end, from (\ref{eqaestimated}),(\ref{eqksi1}),(\ref{eqksi2}) we deduce
$$ |\Lambda_{\mathrm{W}}(F_1,F_2,F_3)| \leq 7 , $$
which is exactly the estimate in Theorem \ref{thmmaintheorem}.

\begin{bibdiv}
\begin{biblist}

\bib{Bil}{article}{
author={P. Billard},
title={Sur la convergence presque partout des s\'{e}ries de Fourier-Walsh des fonctions de l'espace $L^2(0,1)$},
journal={Studia Math.},
volume={28},
year={1967},
number={},
pages={363--388},
note={},
eprint={}
}

\bib{Dem}{article}{
author={C. Demeter},
title={A guide to Carleson's Theorem},
journal={},
volume={},
date={},
number={},
pages={},
note={},
eprint={arXiv:1210.0886 [math.CA]}
}

\bib{DT}{article}{
author={C. Demeter},
author={C. Thiele},
title={On the two-dimensional bilinear Hilbert transform},
journal={Amer. J. Math.},
volume={132},
date={2010},
number={1},
pages={201--256},
note={},
eprint={}
}

\bib{Kov1}{article}{
author={V. Kova\v{c}},
title={Bellman function technique for multilinear estimates and an application to generalized paraproducts},
journal={Indiana Univ. Math. J.},
volume={60},
year={2011},
number={3},
pages={813--846},
note={},
eprint={}
}

\bib{Kov2}{article}{
author={V. Kova\v{c}},
title={Boundedness of the twisted paraproduct},
journal={Rev. Mat. Iberoam.},
volume={28},
year={2012},
number={4},
pages={1143--1164},
note={},
eprint={}
}

\bib{LT1}{article}{
author={M. Lacey},
author={C. Thiele},
title={$L^p$ estimates on the bilinear Hilbert transform for $2<p<\infty$},
journal={Ann. of Math. (2)},
volume={146},
year={1997},
number={3},
pages={693--724},
note={},
eprint={}
}

\bib{LT2}{article}{
author={M. Lacey},
author={C. Thiele},
title={On Calder\'{o}n's conjecture},
journal={Ann. of Math. (2)},
volume={149},
year={1999},
number={2},
pages={475--496},
note={},
eprint={}
}

\bib{MTT}{article}{
author={C. Muscalu},
author={T. Tao},
author={C. Thiele},
title={A Carleson type theorem for a Cantor group model of the scattering transform},
journal={Nonlinearity},
volume={16},
year={2003},
number={1},
pages={219--246},
note={},
eprint={}
}

\bib{Thi}{article}{
author={C. Thiele},
title={Time-frequency analysis in the discrete phase plane \emph{(Ph.D. thesis, Yale Univ., 1995)}},
book={
title={Topics in analysis and its applications},
publisher={World Sci. Publ., River Edge, NJ},
date={2000},
pages={99--152}
}}

\end{biblist}
\end{bibdiv}

\end{document}